\definecolor{darkgreen}{rgb}{0,.5,0}
\numberwithin{equation}{section}
\theoremstyle{plain}
\newtheorem*{theorem*}{Теорема}
\newtheorem{theorem}{Теорема}
\newtheorem{lemma}{Лемма}[section]
\newtheorem{corollary}{Следствие}
\newtheorem{corollary*}{Следствие}
\theoremstyle{definition}
\newtheorem*{definition*}{Определение}
\newtheorem*{remark}{Замечание}
\begin{document}

УДК 512.552.4 + 512.572

\textbf{Относительно свободные ассоциативные алгебры }

\textbf{ранга 2 и 3 с тождеством Ли нильпотентности и}

\textbf{системы порождающих некоторых $\rm T$-пространств}

\textbf{С.В. Пчелинцев}

\textbf{Аннотация.} Изучаются относительно свободные ассоциативные алгебры $F_{r}^{(n)}$ ранга $r=2,3$ с тождеством $[x_1,...,x_n]=0$ Ли нильпотентности степени $n\ge 3$ над полем $K$ характеристики $\ne 2,3$. Доказана теорема о произведении $T^{(m)} T^{(n)}$ для ассоциативной алгебры $A$ ранга 3: $T^{(m)}(A)\cdot T^{(n)}(A)\subseteq T^{(m+n-1)}(A)$, где $T^{(n)}$~--- $\rm T$-идеал в $A$, порожденный коммутатором $[x_1,...,x_n]$; ограничение на ранг существенно.
Получено описание тождеств от трех переменных алгебры $F^{(n)} $, из которого, в частности, следует, что $Z(F_{r}^{(n)}) = (T^{ (n-1)}+Z_{q}) (F_{r}^{(n)})$, где  $p={\rm char} (K)\ge 5$, $q-$наименьшая степень $p$ такая, что $q\ge n-1$ и $Z_{q}$~--- $\rm  T$-пространство, порожденное $x^{q} $. Доказано также равенство $Z(F_{2}^{ (n)} )=F_{2}^{(n)} \cap Z(F^{(n)})$.
Доказано, что в алгебре $F_{2}^{(n)}, n\ge 4$ над полем $K$ характеристики $p\geq n$ существует конечный убывающий «композиционный» ряд $\rm  T$-идеалов $T^{(3)} = T_1 \supset T_{2} \supset ...T_k \supset T_{k+1} =0$ такой, что все факторы $T_i /T_{i+1} $ не содержат собственных $\rm T$-пространств.

Библиография: 16 наименований.

\textit{Ключевые слова:} тождество Ли нильпотентности, центр, ядро, собственный многочлен, тождество от 2-х переменных, $\rm T$-пространство.

\textbf{Введение}

Всюду в работе рассматриваются только \textit{ассоциативные} алгебры с единицей 1 над бесконечным полем $K$ характеристики, отличной от 2 и 3. Введем следующие обозначения:

$F$~--- свободная ассоциативная алгебра над множеством $X= \{x_1,x_2 ,\dots\}$ свободных порождающих; $F_{r}$~--- подалгебра в $F$, порожденная множеством $X_{r} = \{x_1,\dots,x_{r} \}$;

$ [x_1,\dots,x_n ]-$правонормированный коммутатор степени $n\ge 2$, т.е. $ [x_1,x_2 ]=x_1x_2 -x_2 x_1$
и по индукции $ [x_1,\dots,x_n]= [[x_1,\dots ,x_{n-1} ],x_n]$;

${\rm  LN} (n):[x_1,\dots ,x_n]=0$---тождество Ли нильпотентности степени $n$;

$T^{(n)} = ( [x_1,\dots,x_n])^{T} $ и $V^{ (n)} = \{[x_1,\dots,x_n ]\}^{T}$~--- $\rm T$-идеал и $\rm T$-пространство соответственно, порожденные коммутатором степени $n$;

$F^{(n)} = F/T^{(n)}$~--- относительно свободная алгебра с тождеством ${\rm  LN} (n)$; $F_2^{ (n)} $ и $F_{3}^{(n)}$~--- ее подалгебры ранга 2 и 3 в $F^{(n)} $, порожденные множествами $X_2$ и $X_{3}$ соответственно (мы считаем также, что $X_2 = \{x,y\},X_{3} = \{x,y,z\}$);

$Z^{*}  (A)-$ядро $A$ (наибольший идеал $A$, содержащийся в её центре $Z(A)$).

Напомним, что многочлен $f\in F$ является \textit{собственным}, если он содержится в подалгебре, порожденной коммутаторами вида
$[x_1, \dots ,x_m]$, где $x_1,\dots,x_m \in X$ и $m\ge 2$.

В работах [1] - [13] с разных точек зрения изучались алгебры $F^{(n)} $ при $n\ge 3$. В [1] и [2] были построены аддитивные базисы в алгебрах $F^{(n)} $ для $n=3,4$; а в [8] и [10] были найдены их центры. Задача построения аддитивного базиса в алгебре
$F_2^{(n)}$ отмечалась в [9].

В [12] сначала были описаны собственные центральные многочлены от 2-х переменных алгебр $F^{ (5)} $ и $F^{ (6)} $.  Затем в [13] были описаны собственные центральные многочлены алгебр $F^{ (5)} $ и $F^{ (6)} $ над полем характеристики 0; оказалось, в частности, что центр алгебры $F^{ (5)} $ содержится в T-идеале, порожденном коммутатором степени 4. Задача описания центров этих алгебр в полном объеме пока остается открытой.

В данной работе изучаются алгебры $F_2^{ (n)} $ и $F_{3}^{ (n)} $ ранга 2 и 3 с тождеством ${\rm  LN} (n)$ Ли нильпотентности произвольной степени $n$. Работа состоит из пяти параграфов.

В \S 1 доказана теорема 1 о произведении $T^{(m)}T^{(n)}$ для ассоциативной алгебры $A$ от трех порождающих (ранга 3):
$T^{(m)}(A)\cdot T^{(n)}(A)\subseteq T^{(m+n-1)}(A)$. Ограничение на ранг является существенным, поскольку $[x,y][z,t]\notin T^{(3)}$. Отметим также, что при доказательстве теоремы 1 используется теорема о произведении $T^{(m)}T^{(n)}$ из [12].

В \S 2 получено описание тождеств от трех переменных алгебры $F^{(n)} $ (см. теорему 2); более точно, указан аддитивный базис алгебры $F_{3}^{(n)}$; он получается факторизацией базиса Пуанкаре-Бирхгофа-Витта свободной ассоциативной алгебры $F_{3}$ по идеалу $T^{(n)}.$

В \S 3 содержится описание центров $Z(F_r^{(n)} )$ алгебр $F_r^{(n)}$ ранга $r=2$ и $r=3$. Так, в случае поля $K$ характеристики 0 доказано, что для указанных значений ранга $r$ верно равенство $Z(F_r^{(n)}) = T^{(n-1)}(F_r^{(n)})$, в частности, всякий центральный элемент в алгебре $F_r^{(n)}$ является ядерным, т.е. порождает центральный идеал. Заметим, что если $r\ge 4$, то такого свойства нет даже в алгебре $F_r^{(3)}$. Кроме того, получено описание центра алгебры $F_{3}^{ (n)} $ над полем конечной характеристики $p\ge 5$ (см. теорему 4). Наконец, показано, что $Z(F_2^{(n)}) = F_2^{(n)} \cap Z(F^{(n)})$. Заметим, что для алгебр $F_{3}^{(n)}$ аналогичное утверждение неверно.

В теории многообразий ассоциативных алгебр центральное место занимает проблема Шпехта о конечной базируемости тождеств любой ассоциативной алгебры над полем характеристики 0. Положительное решение проблемы Шпехта получено А.Р. Кемером [14]. В [15] им же положительно решена, так называемая, локальная проблема Шпехта над бесконечным полем конечной характеристики.

Хорошо известны также результаты о конечной и бесконечной базируемости $\rm T$-пространств (см. [3] - [6]). Над бесконечным полем характеристики 2 примеры бесконечно базируемых $\rm T$-пространств были анонсированы А.В. Гришиным [7]. Над бесконечными полями конечной характеристики В.В. Щиголев доказал существование бесконечно базируемых $\rm T$-пространств [4] по модулю $\rm T$-идеала $T^{(3)}$. Существование бесконечно базируемых $\rm T$-идеалов доказано в работах [5] и [6].

Задача изучения $\rm T$-пространств, содержащихся в этаже $T^{(n)} /T^{(n+1)}$ для свободной алгебры $F_2$ ранга 2 была поставлена А.В. Гришиным. Он же получил описание $\rm T$-пространства $T^{(3)} /T^{(4)}$ для алгебры $F_2$ [11].

\S 4 и \S 5 посвящены описанию порождающих систем $\rm T$-пространств, содержащихся в 2-порожденной алгебре $F_2^{ (n)}$ с тождеством Ли нильпотетности степени $n$. Так, в теореме 5 описаны независимые системы порождающих $\rm T$-пространств, содержащихся в коммутанте алгебры $F_2^{(3)}$ (обобщение теоремы В.В. Щиголева [4]). Теорема 6 утверждает, что в алгебре $F_2^{(n)}  (n\ge 4)$ на полем $K$ характеристики $p\geq n$ существует конечный убывающий «композиционный» ряд $\rm T$-идеалов $T^{(3)} = T_1 \supseteq T_2 \supseteq \dots T_k \supseteq T_{k+1} =0$ такой, что все факторы $T_i /T_{i+1} $ не содержат собственных $\rm T$-пространств.

\section{Теорема о произведении $T^{ (m)} T^{(n)} $ для алгебр ранга 3}

В 1965 г. В.Н. Латышев [1] доказал, что для любых натуральных\textit{ $m,n\ge 2$ }справедливо включение $T^{(m)} \cdot T^{(n)} \subseteq T^{(m+n-2)} $ в алгебре $F$ (над полем характеристики $\neq 2$).

\begin{lemma} \label{l:Lat}
В алгебре $F$ для любого $x\in F$ верно:

а) $ [V^{(m)} ,x] [V^{ (n-1-m)} ,x]\subseteq T^{ (n)} $;

б) $T^{(m)} T^{(n)} \subseteq T^{(m+n-2)}$.
\end{lemma}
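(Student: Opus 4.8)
The plan is to recall part~b)---it is precisely the inclusion proved by Latyshev in [1]---and to prove part~a). Throughout I would use the \emph{collecting normal form} for $T$-ideals: every element of $T^{(k)}$ is a $K$-linear combination of products $u\cdot[y_{i_1},\dots,y_{i_\ell}]$ with $u$ a (possibly empty) word and $\ell\ge k$; in particular every left-normed commutator of length $\ge k$ with arbitrary, possibly composite, entries---i.e.\ every element of $V^{(\ell)}$ with $\ell\ge k$---already lies in $T^{(k)}$. Together with the Leibniz rule $[pq,z]=[p,z]q+p[q,z]$ this gives the elementary estimate that a product of two commutators of lengths $a,b$, after being bracketed $s$ times, lies in $T^{(a+b-2+s)}$: each bracketing adds a unit of commutator-degree to one of the two factors, improving Latyshev's $s=0$ bound by one.

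For a) I would first note that, since $V^{(m)}$ and $V^{(n-1-m)}$ are $T$-spaces and $T^{(n)}$ is a $T$-ideal, it suffices to treat the generic instance $v=[x_1,\dots,x_m]$, $w=[x_{m+1},\dots,x_{n-1}]$ with $x$ a further variable. Since ${\rm char}\,K\ne2$ I would then polarize the common variable: replacing $x$ by $x+x'$ in $[v,x][w,x]$ and extracting the cross term reduces the claim to the bilinear inclusion $[v,x][w,x']+[v,x'][w,x]\in T^{(n)}$, after which one puts $x'=x$. Expanding $[vw,x,x']$ by the Leibniz rule yields the identity
\[
[v,x][w,x']+[v,x'][w,x]=[vw,x,x']-v[w,x,x']-[v,x,x']w ,
\]
so it remains to place the three terms on the right into $T^{(n)}$. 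For $n=3$ this is immediate: $[vw,x,x']=[[vw,x],x']\in V^{(3)}\subseteq T^{(3)}$ because $vw$ may be substituted into the first slot of $[x_1,x_2,x_3]$, while $v[w,x,x']\in F\cdot V^{(3)}$ and $[v,x,x']w\in V^{(3)}\cdot F$ likewise lie in $T^{(3)}$; this recovers the relation $[x_1,x_3][x_2,x_4]+[x_1,x_4][x_2,x_3]\in T^{(3)}$, equivalently $2[x_1,x_3][x_2,x_3]\in T^{(3)}$.

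For $n\ge4$ the three terms above are, by the elementary estimate, known a priori only to lie in $T^{(n-1)}$, and gaining the missing unit of degree is the heart of the proof. I would argue by induction on $n$. The terms $v[w,x,x']$ and $[v,x,x']w$ are again products of two commutators whose factors share a common final letter ($x'$, respectively $x$); after rewriting them by the Leibniz/Jacobi identities modulo commutators of length $\ge n$ (which sit in $T^{(n)}$ by the collecting form), the induction hypothesis applies to the shorter shared-variable products that remain. The delicate term is $[vw,x,x']$, a doubly-bracketed product of two commutators: improving it from $T^{(n-1)}$ to $T^{(n)}$ is what forces the extra structural input, and the natural tool is the sharper description of the product $T^{(m)}T^{(n)}$ from [12] (or an argument in the same spirit as the $n=3$ case). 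Carrying out the bookkeeping of all the error terms so that the induction closes---and checking that the rational coefficients that arise stay invertible, which is where ${\rm char}\,K\ne3$ becomes relevant while everything preceding uses only ${\rm char}\,K\ne2$---is the step I expect to be the main obstacle.

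Part~b) is Latyshev's theorem [1]; it is recorded here because it is used repeatedly in what follows, and because a) sharpens it precisely for the products treated there: for two commutators of lengths $a$ and $b$ that share a common last variable, a) gives membership in $T^{(a+b-1)}$ rather than only $T^{(a+b-2)}$.
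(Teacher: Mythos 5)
In the paper this lemma is not proved at all: both parts are classical results of Latyshev and are simply quoted from [1] (hence the label of the lemma), so the expected ``proof'' is a citation, for a) just as much as for b). Your own attempt at a) contains a genuine gap, which you yourself flag. The polarization step and the Leibniz identity $[v,x][w,x']+[v,x'][w,x]=[vw,x,x']-v[w,x,x']-[v,x,x']w$ are correct, and $n=3$ is fine, but for $n\ge 4$ the decomposition destroys exactly the structure that makes the statement true. The terms $v[w,x,x']$ and $[v,x,x']w$ are \emph{not} products of commutators sharing a common final letter ($v$ ends in its own last entry, not in $x'$), so your inductive hypothesis, which concerns the shared-last-variable situation, does not apply to them; and placing $v[w,x,x']$ in $T^{(n)}$ for arbitrary $v\in V^{(m)}$, $w\in V^{(n-1-m)}$, $x,x'\in F$ amounts to $V^{(m)}V^{(n+1-m)}\subseteq T^{(m+(n+1-m)-1)}$, i.e.\ to the strong product property, which Lemma~\ref{l:GrPc1} gives only when one of the two lengths is even (or, by Krasilnikov, equals $3$) and which the paper states is open in general. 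For instance, with $n=9$, $m=5$ your reduction needs $T^{(5)}T^{(5)}\subseteq T^{(9)}$, which no cited result supplies. So, unless you keep the three right-hand terms together and exhibit cancellation (which you do not attempt), the reduction replaces a true classical statement by instances of an open problem, and the induction cannot close.

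The fallback of appealing to [12] does not repair this: the refined product inclusions there (and Theorem~\ref{th:prod} of the present paper) are themselves built on Latyshev's Lemma~\ref{l:Lat} and on the parity-type hypotheses just mentioned, so invoking them here is circular in the logical order of the paper and still does not cover the odd--odd case your expansion produces. If you want an independent proof of a), the argument must preserve the common last variable throughout --- e.g.\ via exchange congruences that move a variable from one commutator factor to the other modulo a deeper T-ideal, in the spirit of the manipulations in the proof of Theorem~\ref{th:prod} --- rather than via the Leibniz expansion you chose; otherwise, cite [1] for both parts.
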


\begin{lemma} \label{l:GrPc1}
Если хотя бы одно из чисел $m,n\geq2$  нечетно, то в свободной алгебре $F$ выполнено включение
$$ T^{(m)} \cdot T^{(n)} \subseteq T^{(m+n-1)};$$
там же показано, что указанное ограничение на числа $m,n$ является существенным.
\end{lemma}

В 1978 г. И.Б. Воличенко [2] доказал лемму \ref{l:GrPc1} при условии, что одно из чисел $m,n$ равно $3$. В работе [12] это утверждение называлось \emph{теоремой о произведении} $T^{(m)}T^{(n)}$.

Важный элемент доказательства теоремы о произведении заключался в следующем утверждении (см. [12], лемма 2).
\begin{lemma}\label{l:GrPc2}
Для любых $a,b\in F$ справедливо включение
$$[T^{(n)},a,b] + [T^{(n)}, [a,b]]\subseteq T^{(n+2)}.$$
\end{lemma}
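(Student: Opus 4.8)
\emph{Sketch of proof.} The plan is to reduce to the one-sided inclusion $[T^{(n)},a,b]\subseteq T^{(n+2)}$ and then to verify it on a convenient spanning set of $T^{(n)}$, expanding the double commutator by the Leibniz rule and absorbing each summand into $T^{(n+2)}$ by means of Lemma~\ref{l:GrPc1}. For the reduction, the Jacobi identity $[[u,a],b]=[u,[a,b]]+[[u,b],a]$ gives $[u,[a,b]]=[u,a,b]-[u,b,a]$; hence, once $[T^{(n)},a,b]\subseteq T^{(n+2)}$ is known for all $a,b\in F$, applying it to the pairs $(a,b)$ and $(b,a)$ also yields $[T^{(n)},[a,b]]\subseteq T^{(n+2)}$.

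First I would record the structural fact that $T^{(n)}$ is spanned, as a $K$-space, by the elements $[g_1,\dots,g_k]\,h$ with $k\ge n$ and $g_1,\dots,g_k,h\in F$ (where $h$ may be $1$). Indeed, the span $U$ of these elements is a right ideal by construction, and it is also a left ideal: for $s=[g_1,\dots,g_k]$ one has $f\,(sh)=(sf)h-[s,f]h$ with $sf\in U$ and $[s,f]=[g_1,\dots,g_k,f]\in U$. Since $U$ contains every substitution instance $[g_1,\dots,g_n]$, and such instances generate $T^{(n)}$ as a two-sided ideal, while conversely $[g_1,\dots,g_k]h\in T^{(k)}\subseteq T^{(n)}$, we get $U=T^{(n)}$.

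Now fix $a,b\in F$ and take $u=vh$ with $v=[g_1,\dots,g_k]$, $k\ge n$. Writing $D_a(w)=[w,a]$, $D_b(w)=[w,b]$ for the corresponding derivations of $F$ and using $D_a(v)=[g_1,\dots,g_k,a]$, one expands
\[
  [[u,a],b]=D_bD_a(vh)=D_bD_a(v)\,h+D_a(v)\,D_b(h)+D_b(v)\,D_a(h)+v\,D_bD_a(h)
\]
and treats the four summands separately. First, $D_bD_a(v)=[g_1,\dots,g_k,a,b]\in T^{(k+2)}$, so $D_bD_a(v)\,h\in T^{(k+2)}\subseteq T^{(n+2)}$. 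Next, $D_a(v),D_b(v)\in T^{(k+1)}$ and $D_a(h),D_b(h)\in T^{(2)}$, and since $k+1\ge n+1\ge3$, Lemma~\ref{l:GrPc1} gives $T^{(k+1)}T^{(2)}\subseteq T^{(k+2)}\subseteq T^{(n+2)}$, which handles the two middle terms. Finally, $D_bD_a(h)=[h,a,b]$ is a substitution instance of $[x_1,x_2,x_3]$, hence lies in $T^{(3)}$, so by Lemma~\ref{l:GrPc1} the last term lies in $T^{(k)}T^{(3)}\subseteq T^{(k+2)}\subseteq T^{(n+2)}$. As such $u$ span $T^{(n)}$ and $D_bD_a$ is linear, this gives $[T^{(n)},a,b]\subseteq T^{(n+2)}$ for every $n\ge2$, and together with the first paragraph the proof is complete.

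The step I expect to be the main obstacle is the choice of the spanning set. If instead one uses the naive spanning set $f_0[g_1,\dots,g_n]f_1$, the Leibniz expansion produces nine terms, and for $n=2$ several of them lie a priori only in $T^{(2)}T^{(2)}T^{(2)}$, where Lemma~\ref{l:GrPc1} does not apply directly. Pushing the commutator to the left end (the structural fact above) is exactly what makes all the index bookkeeping go through uniformly: after a single differentiation the commutator factor becomes a commutator of length $k+1\ge3$, while the remaining factor $[h,a,b]$ is already a commutator of length $3$, so Lemma~\ref{l:GrPc1} is always applicable.
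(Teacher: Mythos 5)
Your skeleton is reasonable: the Jacobi reduction of $[T^{(n)},[a,b]]$ to $[T^{(n)},a,b]$, the observation that $T^{(n)}$ is spanned by the elements $[g_1,\dots,g_k]h$ with $k\ge n$, and the treatment of the summands $[g_1,\dots,g_k,a,b]\,h$ and $[g_1,\dots,g_k]\,[h,a,b]$ (the latter via $T^{(k)}T^{(3)}\subseteq T^{(k+2)}$, which is legitimate because $3$ is odd) are all correct. Note, for the record, that the paper itself gives no proof of Lemma~\ref{l:GrPc2}: it is quoted from [12, Lemma~2], so your argument can only be judged on its own merits.

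The genuine gap is in the two cross terms $[g_1,\dots,g_k,a][h,b]$ and $[g_1,\dots,g_k,b][h,a]$. You justify them by ``Lemma~\ref{l:GrPc1} gives $T^{(k+1)}T^{(2)}\subseteq T^{(k+2)}$ since $k+1\ge 3$'', but the hypothesis of Lemma~\ref{l:GrPc1} is not a size condition: it requires at least one of the two exponents to be \emph{odd}, and the paper emphasizes that this parity restriction is essential (compare the remark that $[x,y][z,t]\notin T^{(3)}$, a failure with both exponents even). For $k\ge n+1$ your conclusion holds anyway, simply because $[g_1,\dots,g_k,a]\in T^{(k+1)}\subseteq T^{(n+2)}$; but in the critical case $k=n$ with $n$ odd both exponents $n+1$ and $2$ are even, Lemma~\ref{l:GrPc1} is inapplicable, and the inclusion $T^{(n+1)}T^{(2)}\subseteq T^{(n+2)}$ you invoke is precisely of the excluded both-even type, so each cross term taken separately cannot be expected to lie in $T^{(n+2)}$. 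What rescues the computation is that the two cross terms occur as a symmetric pair: part (a) of Lemma~\ref{l:Lat}, i.e. $[V^{(m)},x][V^{(s)},x]\subseteq T^{(m+s+1)}$, applied with the first factor of the form $[v,a]$, $v\in V^{(n)}$, and the second of the form $[h,b]$, and then linearized in the repeated variable (substitute $x\mapsto a+b$ and subtract the pure terms), yields exactly $[v,a][h,b]+[v,b][h,a]\in T^{(n+2)}$, which is the combination produced by the expansion (\ref{eq:id4}) of $[vh,a,b]$. With the cross terms handled jointly in this way your proof goes through; as written, it does not.
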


Напомним, что в любой ассоциативной алгебре справедливы тождества:
\begin{gather}
  \label{eq:id1}
  [xy,z]= [x, yz] + [y, zx],\\
  \label{eq:id2}
  [xy,z]=x[y,z]+[x,z]y.
\end{gather}

\begin{theorem} \label{th:prod}
  Во всякой 3-порожденной алгебре $A$ верно включение:
$$T^{(m)} (A) \cdot T^{(n)} (A)\subseteq T^{(m+n-1)} (A),$$
где $T^{(m)}$ обозначает $T^{(m)}(A)$.
\end{theorem}
\begin{proof}
Без ограничения общности, можно считать, что $A$ является свободной алгеброй ранга 3. Итак, надо понять, что для любых $m\ge n\ge 2$  верно включение
$$T^{(m)} \cdot T^{(n)} \subseteq T^{(m+n-1)}.$$

Сначала рассмотрим случай: $m$ четно и $n=2$. Проведем индукцию по $m$. Основание индукции при $m = 2$ верно, поскольку $[x,z][x,y] \in T^{(3)}$ и по модулю $T^{(3)} $ коммутант $T^{(2)} $ как идеал порождается множеством $[X_{3} ,X_{3} ].$ Если $m=4$, то в силу центральности элемента Холла в алгебре $F^{(5)} $ (см. [12]), имеем: $[[x,y]^{2},z,t] \in T^{(5)}$. Следовательно,

\begin{equation}\label{eq:id3}
 [x,y,z,t][x,y] \in T^{(5)}.
\end{equation}

Положим $V^{(2)}=[A,A]$ и определим по индукции $V^{(k+1)} = [V^{(k)},A]$.
Поскольку в силу тождества (\ref{eq:id1}) по модулю $V^{(3)}$ верно сравнение
$$[ab,c] \equiv [a,bc]+[b,ac],$$
то $V^{(4)} V^{(2)}$ по модулю $T^{(5)}$ представимо в виде линейной комбинации элементов $w:=[x,z_1 ,z_2 ,z_3 ][y,z]$, где $x,y,z\in X_{3}$. Если $z_3 \in \{y,z\}$, то $w\in T^{(5)}$, значит, можно считать, что $z_3 =x$. Тогда в силу (\ref{eq:id3}) по модулю $T^{(5)}$ имеем:

\[w \equiv[x,t_1 ,t_2 ,x][y,z] \equiv [x,t_1 ,t_2 ,y][z,x]\equiv - [x,z,t_2 ,y][t_1 ,x].\]

По лемме \ref{l:GrPc1} можно считать, что $t_1 \in X_3 $. Тогда по-доказанному $w \equiv 0$.

Заметим, что в алгебре $A$ верно тождество

\begin{equation}\label{eq:id4}
[xy,a,b]=[x,a,b]y+x[y,a,b]+[x,b][y,a]+[x,a][y,b].
\end{equation}

Допустим теперь, что $v_{s} \in V^{(s)} ,a,b,c,d\in A$. Тогда по модулю $T^{(m+1)} $ в силу тождество (\ref{eq:id4}),
лемм \ref{l:GrPc1} и \ref{l:GrPc2} и индуктивного предположения имеем
\[[[v_{m-3},a],b,a][c,d] = [[v_{m-3} ,a][c,d],b,a]-[v_{m-3} ,a][c,d,b,a]-\]
\[-[v_{m-3},a,a][c,d,b]-[v_{m-3} ,a,b][c,d,a] \equiv 0.\]
Итак, доказано, что элемент $w:=[v_{m-3} ,a,t,b][c,d]$ кососимметричен по переменным $a,b,c,d\in A$ по модулю $T^{(m+1)} $. Кроме того, по модулю $T^{(m+1)} $ элемент $w$ является дифференцированием по переменной $d$, значит, можно считать, что $a,b,c,d\in X_{3} $, но тогда $w \in T^{(m+1)} $. Таким образом, доказано, что

\begin{equation}\label{eq:id5}
V^{(m)} V^{(2)} \subseteq T^{(m+1)}.
\end{equation}

Используя включение (\ref{eq:id5}), легко завершить доказательство теоремы. В самом деле, допустим, что $V^{(m)} V^{(n)} \subseteq T^{(n+m-1)} $ для данного четного $n$ и произвольного четного $m$. Тогда в силу (\ref{eq:id5}) и леммы \ref{l:GrPc2} имеем:
$$V^{(m)} [V^{(n)},x,y] \subseteq [V^{(m)} V^{(n)},x,y] - V^{(m+2)} V^{(n)} + T^{(m+n+1)}$$
$$\subseteq T^{(m+n+1)}.$$
\end{proof}

\begin{remark}
  Поскольку $[x,y] [z,t]\notin T^{(3)}$, то ограничение в теореме \ref{th:prod} на ранг алгебры является существенным.
\end{remark}

\begin{corollary}
Верно включение $T^{(n-1)} (F_{3}^{(n)})\subseteq Z(F_{3}^{(n)})$.
\end{corollary}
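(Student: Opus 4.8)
The plan is to deduce the statement directly from Theorem~\ref{th:prod}, exploiting that $F_3^{(n)}$ is a $3$-generated algebra. Since $F_3^{(n)}$ lies in the variety $\mathrm{LN}(n)$, every Lie monomial of length $n$ vanishes in it, so $T^{(n)}(F_3^{(n)})=0$. Moreover, being the value of a $T$-ideal in the (unital) algebra $F_3^{(n)}$, the space $T^{(n-1)}(F_3^{(n)})$ is the $K$-linear span of the elements
$$w \;=\; f\,[g_1,\dots,g_{n-1}]\,h, \qquad f,h,g_1,\dots,g_{n-1}\in F_3^{(n)},$$
and $Z(F_3^{(n)})$ is a $K$-subspace; hence it suffices to prove that each such $w$ is central, i.e. $[w,a]=0$ for every $a\in F_3^{(n)}$.

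To this end I would expand $[w,a]$ by applying the Leibniz identity \eqref{eq:id2} twice (first to $w=(f\,[g_1,\dots,g_{n-1}])\,h$, then to the factor $f\,[g_1,\dots,g_{n-1}]$), which gives
$$[w,a] \;=\; f\,[g_1,\dots,g_{n-1}]\,[h,a] \;+\; f\,[g_1,\dots,g_{n-1},a]\,h \;+\; [f,a]\,[g_1,\dots,g_{n-1}]\,h .$$
The middle summand lies in $T^{(n)}(F_3^{(n)})=0$. For the first summand, $[g_1,\dots,g_{n-1}]\in T^{(n-1)}(F_3^{(n)})$ and $[h,a]\in T^{(2)}(F_3^{(n)})$, so Theorem~\ref{th:prod} applied with the indices $n-1$ and $2$ gives $[g_1,\dots,g_{n-1}]\,[h,a]\in T^{(n)}(F_3^{(n)})=0$, and left multiplication by $f$ keeps it zero. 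The last summand is handled symmetrically: $[f,a]\in T^{(2)}(F_3^{(n)})$, $[g_1,\dots,g_{n-1}]\in T^{(n-1)}(F_3^{(n)})$, and Theorem~\ref{th:prod} with the indices $2$ and $n-1$ gives $[f,a]\,[g_1,\dots,g_{n-1}]\in T^{(n)}(F_3^{(n)})=0$. Hence $[w,a]=0$, and passing to $K$-linear spans yields $T^{(n-1)}(F_3^{(n)})\subseteq Z(F_3^{(n)})$.

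I do not expect a genuine obstacle here: all the substance is already contained in Theorem~\ref{th:prod}, and what remains is only the Leibniz rule together with the vanishing of length-$n$ commutators in $F_3^{(n)}$. The one point worth flagging is that the rank-$3$ hypothesis is essential: for $F^{(n)}$ in infinitely many variables only Lemma~\ref{l:Lat} is available, giving $T^{(n-1)}T^{(2)}\subseteq T^{(n-1)}$, which does not push $w$ into $T^{(n)}$. Thus this inclusion is really a corollary of the $3$-generated product estimate and cannot be obtained from the classical bound $T^{(m)}T^{(n)}\subseteq T^{(m+n-2)}$.
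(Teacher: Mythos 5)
Your proof is correct and follows essentially the same route as the paper: reduce to generators of the $\rm T$-ideal $T^{(n-1)}(F_3^{(n)})$, expand the commutator with $a$ by the Leibniz rule \eqref{eq:id2}, and kill every summand using Theorem~\ref{th:prod} together with $T^{(n)}(F_3^{(n)})=0$. The only cosmetic difference is that the paper first writes $T^{(n-1)}=V^{(n-1)}F_3^{(n)}$ (one-sided products, since length-$n$ commutators vanish) and so needs a single Leibniz step, whereas you work with two-sided products $f[g_1,\dots,g_{n-1}]h$ and apply the theorem twice.
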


\begin{proof}
Пусть $T^{(m)} = T^{(m)} (F_{3}^{(n)}),V^{(m)} = V^{ (m)} (F_{3}^{ (n)} )$. Тогда по теореме \ref{th:prod}:
\[ [T^{ (n-1)} ,F_{3}^{(n)} ]= [V^{ (n-1)} F_{3}^{(n)},F_{3}^{ (n)} ]\subseteq T^{ (n)} +V^{ (n-1)} T^{(2)} =0.\]
\end{proof}

\section{Тождества алгебры $F_{3}^{(n)}$}
\subsection{Понятие веса многочлена.}

Пусть $X_3=\{x_1 ,x_2 ,x_3 \}$; $F_3$ и $L_3$~--- свободная ассоциативная и свободная алгебра Ли над множеством $X_3$ соответственно. Пусть $e_1 ,\dots,e_{\alpha } ,\dots$~--- базис алгебры Ли $L_3$, состоящий из однородных элементов. При этом предполагается, что базис упорядочен по возрастанию индексов и $e_{\alpha } <e_{\beta }$, если $\deg (e_{\alpha })>\deg (e_{\beta})$.
Из теоремы Пуанкаре-Бирхгофа-Витта (см. [16]) вытекает, что всякий элемент алгебры $F_3$ однозначно представим в виде линейной комбинации \textit{правильных слов} вида
 \begin{equation}\label{e1}
 v_i = e_{i_1 } e_{i_2 } \dots e_{i_t }, \text{где}\quad t\ge 1,\; e_{i_1} \leq \dots \leq e_{i_t};
\end{equation}
в слове $v_i$ элементы $e_{i_{j}}$ перемножаются в алгебре $F_3$. Иначе говоря, правильные слова составляют аддитивный базис алгебры $F_3$ над полем $K$.

Определим \textit{вес} $ {\rm wt}(e_i)$ (the weight) однородного элемента $e_i\in L_3$ как его степень:
$$ {\rm wt}(e_i)= {\rm deg} _{L_3} (e_i).$$
Если $v$ правильное слово указанного вида (\ref{e1}), то положим
$$ {\rm wt}(v_i)=\sum _j {\rm wt}(e_{i_j}) - t +1,$$
и назовем ${\rm wt}(v_i)$ \textit{весом} слова $v_i$. \textit{Весом} ${\rm wt}(a)$ элемента $a\in F_3$ назовем наименьший из весов правильных слов $v_i$, входящих в разложение

$$a= \sum _i \alpha _i v _i, \text{где}\quad 0\ne \alpha _i\in K.$$

Заметим, что если $f\in T^{(n)}$, то ${\rm wt}(f) \geq n$.

\subsection{Теорема о тождествах от 3-х переменных.}
Нам потребуется следующая лемма
\begin{lemma}\label{l:oper}
 Всякий элемент идеала $T^{(m)}$ представим в виде линейной комбинации элементов вида
 \begin{equation} \label{e2}
xM_1 M_2 \ldots M_k,
 \end{equation}
где $M_i$ является либо оператором правого умножения $R_y$, либо оператором внутреннего дифференцирования $D_y$  и среди операторов $M_i$ содержится не менее $(m-1)$-го оператора вида $D_y$, причем $x,y \in X$.
\end{lemma}

 \begin{proof}
 Из определения идеала $T^{(m)}$ следует, что $T^{(m)}$ линейно порождается элементами вида
\begin{equation} \label{e3}
a_1D_{a_2}\dots D_{a_m}R_{a_{m+1}},
 \end{equation}
где $a_i\in F$. Из тождеств (\ref{eq:id1}) и (\ref{eq:id2}) вытекает, что в ассоциативной алгебре справедливы равенства
\begin{equation} \label{e4}
R_{ab}=R_aR_b,\quad D_{ab}=R_aD_b +L_bD_a,\quad L_a = R_a - D_a.
 \end{equation}
Используя (\ref{e4}), по индукции легко понять, что элемент вида (\ref{e3}) представим в виде линейной комбинации элементов вида (\ref{e2}).
 \end{proof}

\begin{theorem} \label{th:ident}
  В алгебре $F^{(n)}$ выполнено тождество $f(x,y,z)=0$ тогда и только тогда, когда ${\rm wt}(f)\geq n$.
\end{theorem}

\begin{proof}
Доказательство. Покажем сначала, что если ${\rm wt}(f)\geq n$, то $f = 0$ в алгебре $F^{(n)}$. Элемент $f$ представим в виде линейной комбинации правильных слов $p_i$ вида (T9), причем ${\rm wt}(p_i)\geq n$ для любого $i$. Пусть $n_j= {\rm deg}(e_{i_j})$. Тогда по определению веса многочлена имеем
$$n = \left(\sum _{1 \leq j \leq n_i}n_i \right) - t + 1.$$
Далее, алгебре $F$ силу теоремы 1 о произведении:
$$T^{(n_1)} T^{(n_2)}\ldots T^{(n_t )} \subseteq T^{(n)},$$
где
$$n=n_1 +\sum _{2 \leq j \leq t}(n_i -1) = \left(\sum _{1\leq j \leq n_i}n_i \right) - t + 1.$$
Значит, если ${\rm wt}(f)\geq n,$ то $f\in T^{(n)}$ и $f=0$ в алгебре $F^{(n)}$. В частности, если вес правильного слова $p$ вида (T9)
не менее $n$, то $p=0$ в $F^{(n)}$.

Допустим, что $f(x,y,z)=0$ в алгебре $F^{(n)}$, т.е. $f \in T^{(n)}$.
Тогда по лемме \ref{l:oper} элемент $f$ линейно выражается через элементы вида $x':=xM_1 M_2 \dots M_k$,
причем в операторном слове $M_1 M_2 \dots M_k $ встречается не менее, чем $(n-1)$ операторов вида $D_y$.
Пусть $M_k=D_y$. Тогда $x'':=x'M_1 M_2 \dots M_{k-1}$ и в операторном слове $M_1 M_2 \dots M_{k-1}$  встречается не менее, чем $(n-2)$ операторов вида $D_y$. По предположению индукции можно считать, что $x''$ является линейной комбинацией правильных слов
$$p_i= e_{i_1 }e_{i_2} \dots e_{i_t},$$
веса которых $\geq n-1$. Заметим, что
$$p_iD_y = \sum_{j=1}^t e_{i_1 } \dots[e_{i_j},y]\dots e_{i_t}.$$
Из доказательства теоремы Пуанкаре-Бирхгофа-Витта следует, что каждый из элементов $e_{i_1} \dots[e_{i_j},y]\dots e_{i_t}$ является линейной комбинацией правильных слов, веса которых $\geq n$.
\end{proof}

\begin{corollary}
 Если $n\ge 3$, то $[x,y]^{n-1} \in T^{(n)}$ и $[x,y]^{n-2} \notin T^{(n)}$, т.е. индекс нильпотентности коммутатора от свободных порождающих алгебры $F^{(n)}$ равен $n-1$.
\end{corollary}

В самом деле, поскольку ${\rm wt}([x,y]^{m} )= m + 1$, то указанное утверждение немедленно вытекает из теоремы 2. Легко понять, что индекс нильпотентности коммутанта $(F^{(n)})^{'}$ алгебры $F^{(n)}$ также равен $n-1$.

\begin{remark}
Соотношение $ [x,y]^{n-1} \in T^{(n)}$ было ранее доказано в [9, предложение 1]. Вопрос о точности этой оценки в работе [9] не обсуждался.
\end{remark}

\section{Центры алгебр $F_2^{(n)}$ и $F_{3}^{(n)}$}
\subsection{Предварительные леммы.}

Напомним несколько хорошо известных результатов, связанных с биномиальными коэффициентами. Как обычно, $(a,b)$ обозначает наибольший общий делитель чисел $a$ и $b$.

\begin{lemma}\label{l:3.1}
Пусть $p$~--- простое число, $t\geq 1$. Биномиальные коэффициенты $\binom {p^t}{i}$ кратны $p$, если $1\leq i\leq p^t-1$.
  Если же $(p,m)=1$, то $\binom {p^tm}{p^t}$ не делится на $p$.
\end{lemma}

Эта лемма представляет собой простое упражнение.

\begin{lemma}\label{l:3.2}
Пусть $n\geq 2$~--- целое число; $y_1=[x,y]$ и по индукции $y_{k+1}=[y_k,x]$. Тогда справедлив коммутаторный бином Ньютона: $$[x^n,y]=\sum_{i=1}^n \binom {n}{i} x^{n-i}y_i,$$
где $\binom {n}{i}$ --биномиальные коэффициенты.
\end{lemma}

Эта лемма легко получается индукцией по $n$.

\begin{lemma}\label{l:3.3}
Если $p=\mathrm {char}(K)$ и $q=p^s\geq n-1, s\geq 1$, то в алгебре $F_2^{(n)}$ справедливы соотношения Фробениуса:
$$(a+b)^q=a^q+b^q,\qquad (ab)^q=a^qb^q,\qquad [a,b^q]=0.$$
\end{lemma}

Доказательство леммы см. в [16, §7 глава 5]. Впрочем она легко вытекает из двух предыдущих лемм.
Отметим также, что подробные доказательства этих трех лемм можно найти в [9].

\subsection{Теоремы о центрах.}
 \begin{theorem}\label{th:z1}
   Если $\mathrm {char}(K)=0$, то $Z(F_r^{(n)})=T^{(n-1)}(F_r^{(n)})$, где $r=2,3$.
 \end{theorem}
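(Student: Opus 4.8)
The inclusion $T^{(n-1)}(F_r^{(n)})\subseteq Z(F_r^{(n)})$ is the Corollary above (and the same computation, using the commutator product theorem of \S1, works for $r=2$), so the real content is the opposite inclusion. Since $T^{(n)}(F_r)$ is a homogeneous $\rm T$-ideal, the algebra $F_r^{(n)}$ is graded by degree, and both $Z(F_r^{(n)})$ and $T^{(n-1)}(F_r^{(n)})$ are homogeneous subspaces (being central, and lying in a $\rm T$-ideal, are conditions one checks degree by degree); hence it is enough to take $f\in Z(F_r^{(n)})$ homogeneous and prove $f\in T^{(n-1)}(F_r^{(n)})$.

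By Theorem~\ref{th:ident} (which applies to $F_2$ and $F_3$, a two-variable polynomial being a special three-variable one) one has $T^{(m)}(F_r)=\mathrm{span}\{\text{normal words of weight}\ge m\}$ for $r\le 3$; consequently the decomposition of $F_r$ into weight-homogeneous parts descends to $F_r^{(n)}$, whose basis is then the normal words of weight $\le n-1$, and $T^{(n-1)}(F_r^{(n)})$ is the span of those of weight $n-1$. So if $f\notin T^{(n-1)}(F_r^{(n)})$ we may write $f=f_k+f_{k+1}+\dots+f_{n-1}$ with $f_j$ the weight-$j$ component and $k$ the least index with $f_k\ne 0$; then $k\le n-2$. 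Bracketing a weight-$j$ element with a generator yields an element of weight $\ge j+1$, and since $[f,x_l]=0$ in $F_r^{(n)}$ its lift to $F_r$ lies in $T^{(n)}(F_r)$, all of whose elements have weight $\ge n$; hence the weight-$(k+1)$ component of $[f,x_l]$, which coincides with that of $[f_k,x_l]$, vanishes for every $l=1,\dots,r$. Pass now to $A:=\mathrm{gr}(F_r)$, the associated graded algebra for the weight filtration: by the Poincar\'e--Birkhoff--Witt theorem $A\cong S(L_r)$ as a commutative algebra, carrying the Poisson bracket induced by the Lie bracket of $L_r$. The image $g\in A$ of $f_k$ is a nonzero weight-homogeneous element with $\{g,x_l\}=0$ for all $l$; since $x_1,\dots,x_r$ generate $L_r$, the Jacobi identity propagates this to $\{g,e\}=0$ for all $e\in L_r$, hence $\{g,-\}\equiv 0$ on $A$, so $g$ lies in the Poisson centre of $S(L_r)$.

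Everything therefore reduces to the lemma that makes the hypothesis $\mathrm{char}\,K=0$ indispensable: \emph{for $r\ge 2$ the Poisson centre of $S(L_r)$ is zero} --- note that in characteristic $p$ it is \emph{not} zero, since $\{x_1^{p},-\}=0$, which is exactly the $Z_q$-correction appearing in Theorem~4. I would prove the lemma by strong induction on the number $t$ of Lie factors. The operator $\{-,x_l\}$ preserves $t$ and raises the $L_r$-degree by one, so a nonzero homogeneous Poisson-central element may be taken to lie in $S^t(L_r)$ and to be homogeneous of some degree $d$; it is then $L_r$-invariant. For $t=1$, $g\in Z(L_r)=0$ (free Lie algebras of rank $\ge 2$ have trivial centre). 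For $t>1$: for each $m\ge1$ the surjection $S^t(L_r^{\ge m})\to S^t(L_r^{m})$ is $L_r$-equivariant with trivial target (brackets carry $L_r^{\ge m}$ into $L_r^{\ge m+1}$); when $d=tm$ all $t$ factors of $g$ have degree exactly $m$, and a short direct computation from the equations $\{g,x_l\}=0$ forces $g=0$ --- this computation uses that the operators $\mathrm{ad}_{x_l}$ ($l=1,\dots,r$) have trivial common kernel on $L_r^{m}$ (centralizers in a free Lie algebra of rank $\ge 2$ are one-dimensional) and, crucially, that the positive-integer coefficients that arise are invertible, so that it breaks down in characteristic $p$. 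When $d>tm$ the image of $g$ in $S^t(L_r^m)$ vanishes, so every monomial of $g$ has a factor of degree $\ge m+1$; taking the part $g_{s_0}$ of $g$ with the fewest such factors and splitting off the (linearly independent) degree-$m$ ``spectator'' factors, the Jacobi identity yields $L_r$-invariant elements of $S^{s_0}(L_r)$ with $s_0<t$ unless $s_0=t$, i.e.\ unless all factors of $g$ already lie in $L_r^{\ge m+1}$; the former is excluded by induction, and in the latter case one repeats the argument with $m+1$ in place of $m$. This loop terminates because $S^t(L_r^{\ge m})$ is concentrated in degrees $\ge tm$. Hence $g=0$, contradicting $g\ne 0$, so $f\in T^{(n-1)}(F_r^{(n)})$.

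The main obstacle is this Poisson-centre lemma --- specifically the interplay, inside the induction, between the lower-central (degree) filtration of $L_r$ and the combinatorics of the ``spectator'' factors, together with the task of isolating exactly where characteristic $0$ enters. The remaining ingredients --- the easy inclusion, the reduction to homogeneous $f$, the identification $\mathrm{gr}(F_r)\cong S(L_r)$ with its Poisson bracket, and the transfer of $[f,x_l]=0$ into a statement about the Poisson centre --- are routine given \S1 and \S2.
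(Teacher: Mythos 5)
Your reduction is sound up to the key lemma, and it is in substance a made-explicit version of the paper's own argument: the paper reduces (by formal differentiation, using $\mathrm{char}\,K=0$) to a proper central element and then reads membership in $T^{(n-1)}$ off the weight criterion of Theorem~\ref{th:ident}; the step ``${\rm wt}([f,x])\ge n$ forces ${\rm wt}(f)\ge n-1$'' is exactly your statement that the lowest weight component cannot Poisson-commute with all generators. The genuine gap is in your proof of that central lemma (triviality of the Poisson centre of $S(L_r)$ in characteristic $0$). The fact that the operators $\mathrm{ad}_{x_l}$ have trivial common kernel on $L_r^{m}$ settles only the case $t=1$. For $t\ge 2$ the equations $\{g,x_l\}=0$ read $\sum_i (\partial g/\partial e_i)\,[e_i,x_l]=0$ with $e_i$ a basis of $L_r^{m}$, and the elements $[e_i,x_l]$ are in general linearly dependent in $L_r^{m+1}$ (Jacobi relations appear already for $r=2$, $m=3$), so these equations do not force the partial derivatives to vanish; the ``short direct computation'' you invoke at $d=tm$ is precisely the nontrivial invariant-theoretic point and is not supplied. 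The ``spectator factor'' induction for $d>tm$ is likewise too vague to check: it is not explained how Jacobi produces $L_r$-invariant elements of $S^{s_0}(L_r)$, nor why the loop in $m$ terminates before the hypothesis $g\ne 0$ is lost.

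The lemma itself is true and can be closed much more cheaply, without your induction: in characteristic $0$ the symmetrization map $\omega\colon S(L_r)\to U(L_r)=F_r$ is an $\mathrm{ad}(L_r)$-equivariant linear isomorphism, so a Poisson-central (in particular $\mathrm{ad}$-invariant) element $g$ gives a central element $\omega(g)$ of the free associative algebra $F_r$; the centre of $F_r$ for $r\ge 2$ is $K$, hence $g\in K$, and since your $g$ has positive weight, $g=0$. With this patch your argument is complete and then coincides in content with the paper's proof, which compresses the same point into the remark that bracketing with a variable changes the weight by at most one after first passing to proper polynomials.
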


\begin{proof}
 Пусть, для определенности, $r=3$. Заметим, что в силу теоремы \ref{th:prod}: $T^{(n-1)}(F_3)\subseteq Z(F_3^{(n)})$.
  Допустим, что $0\ne f$~--- центральный многочлен алгебры $F_3^{(n)}$; тогда $[f,x]=0$. Применяя частные производные $\frac{\partial f}{\partial t}$, где $t\in X_3$, можно считать, что $f$ собственный многочлен. Тогда по теореме \ref{th:prod} для веса ${\rm wt}([f,x])$ выполнено неравенство ${\rm wt}([f,x])\ge n.$ Следовательно, ${\rm wt}(f)\ge n-1$ (при действии дифференцирования $D_x$ вес элемента увеличивается на единицу). Тогда вновь по теореме \ref{th:prod} справедливо включение $f\in T^{(n-1)}(F_3^{(n)})$.
\end{proof}

\begin{theorem}\label{th:z2}
  Пусть $K$~--- бесконечное поле конечной характеристики $p$, $s$~--- такое наименьшее число, что $q:=p^s\ge n-1$, $r=2,3$. Тогда
  \[Z(F_r^{(n)})=( T^{(n-1)}+Z_q)(F_r^{(n)}),\]
  где $Z_q$~--- $\rm T$-пространство, порожденное элементом $x^q$.
\end{theorem}

 \begin{proof}
  В силу леммы  \ref{l:3.3} и теоремы \ref{th:prod} имеем
  $$T^{(n-1)}+Z_q\subseteq Z(F_r^{(n)}).$$

Обратно, пусть $f\in Z(F_r^{(n)})$. Тогда по теореме Пуанкаре-Бирхгофа-Витта $f$  является линейной комбинацией правильных слов $v_i$: $f=\sum_i{\alpha_iv_i}$.

Элемент $f$ можно считать однородным и тогда среди слагаемых только одно может иметь вид $\alpha_0v_0$, где $\alpha_0\ne 0$, $v_0=x_1^{N_1}x_2^{N_2}x_3^{N_3}$, а все остальные $v_i$ содержат в своем разложении хотя бы один коммутатор. Если $N_1$ не делится на $q$, то полагая $x_2=x_3=1$, получаем центральный элемент $x_1^{N_1}$. Итак,
$$[x^{N_1},y]=0, \text { где } x=x_1,y=x_2.$$
Разделим $N_1$ на $q$ с остатком: $N_1=Mq+N, 0<N<q$. Поскольку $x^q\in Z(F_3^{(n)})$, то $x^{Mq}[x^N,y]=0$.
  Применяя лемму \ref{l:3.2}
  \begin{equation}\label{e5}
    0=x^{Mq}[x^N,y]=\sum_{i=1}^N{\binom {N}{i} x^{Mq+N-i}\underbrace{[x,y,x,\dots,x]}_{i+1}},
  \end{equation}
  получаем, что биномиальные коэффициенты $\binom {N}{i}$ для $i=1,\dots,N-1$ должны быть кратны числу $p$.

 Проверим, что $N=p^t$ и $N\ge n-1$. В самом деле, если $N=p^tm$, где $(p,m)=1$ и $t,m>1$, то $N=p^tm<q=p^s$ и $t<s$. Значит, $p^t<n-1$ и ввиду леммы \ref{l:3.1} и равенства \eqref{e5} получаем противоречие $[x^{N_1},y]\ne0$. Отсюда следует, что $N$ кратно $q$ и $\alpha_0v_0\in Z_q\subseteq Z(F_r^{(n)})$.

 Итак, можно считать, что для подходящих собственных многочленов $g_{(i_1,i_2,i_3)}\neq 0$ выполнено равенство
 $$f=\sum_{(i_1,i_2,i_3)}g_{(i_1,i_2,i_3)}x_1^{i_1}x_2^{i_2}x_3^{i_3}.$$

  Выбирая среди троек $(i_1,i_2,i_3)$ максимальную в смысле лексикографического порядка и применяя частные производные $\frac{\partial f}{\partial t}$, где $t\in X_3$, соответствующих порядков, получаем центральный собственный многочлен $g_{(i_1,i_2,i_3)}$. Тогда из теоремы \ref{th:prod} вытекает, что ${\rm wt}( g_{(i_1,i_2,i_3)})\ge n-1$, значит, $g_{(i_1,i_2,i_3)}\in T^{(n-1)}$ и $f\in T^{(n-1)}$.
 \end{proof}

\begin{corollary}
 Верно равенство: $Z(F_2^{(n)})=F_2^{(n)}\cap Z( F^{(n)})$.
\end{corollary}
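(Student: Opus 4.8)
The plan is to treat the two inclusions separately; one is trivial, and the other is obtained by reducing everything, via the structure theorems \ref{th:z1}--\ref{th:z2}, to the $3$-generated algebra $F_3^{(n)}$, where the Corollary to Theorem \ref{th:prod} is available. Throughout I identify $F_2^{(n)}$ and $F_3^{(n)}$ with the subalgebras of $F^{(n)}$ generated by $x_1,x_2$ and by $x_1,x_2,x_3$ respectively; these are the relatively free algebras of ranks $2$ and $3$ in the variety $\mathrm{LN}(n)$, so $F_2^{(n)}\subseteq F_3^{(n)}\subseteq F^{(n)}$. The inclusion $F_2^{(n)}\cap Z(F^{(n)})\subseteq Z(F_2^{(n)})$ is immediate: an element of $F_2^{(n)}$ commuting with all of $F^{(n)}$ in particular commutes with all of the subalgebra $F_2^{(n)}$.

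For the reverse inclusion, since $Z(F_2^{(n)})\subseteq F_2^{(n)}$ is clear, it suffices to show $Z(F_2^{(n)})\subseteq Z(F^{(n)})$, and the key intermediate step is
\[
Z(F_2^{(n)})\subseteq Z(F_3^{(n)}).
\]
By Theorem \ref{th:z1} (when $\mathrm{char}\,K=0$) or Theorem \ref{th:z2} (when $\mathrm{char}\,K=p$, $q=p^{s}\ge n-1$), $Z(F_2^{(n)})$ equals $T^{(n-1)}(F_2^{(n)})$, resp.\ $(T^{(n-1)}+Z_q)(F_2^{(n)})$, so it is enough to check that each generating piece, viewed inside $F_3^{(n)}$, is central there. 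Since $T^{(n-1)}$ is a $\mathrm T$-ideal and $F_2\subseteq F_3$, we have $T^{(n-1)}(F_2^{(n)})\subseteq T^{(n-1)}(F_3^{(n)})$, and $T^{(n-1)}(F_3^{(n)})\subseteq Z(F_3^{(n)})$ by the Corollary to Theorem \ref{th:prod} --- it is precisely here that the $3$-generatedness of $F_3^{(n)}$ is used. For the remaining piece $Z_q(F_2^{(n)})$ (present only in characteristic $p$): it is spanned by $q$-th powers $g^{q}$ with $g\in F_2\subseteq F_3$, and by the $3$-generated analogue of Lemma \ref{l:3.3} (established exactly as in [16]) one has $[h,g^{q}]=0$ for every $h\in F_3^{(n)}$, so $g^{q}\in Z(F_3^{(n)})$. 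This proves $Z(F_2^{(n)})\subseteq Z(F_3^{(n)})$.

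Now take $f\in Z(F_2^{(n)})\subseteq Z(F_3^{(n)})$ and verify $[f,x_i]=0$ for every generator $x_i$; since $[f,\,\cdot\,]$ is a derivation of $F^{(n)}$, this gives $f\in Z(F^{(n)})$. For $i=1,2,3$ we have $x_i\in F_3^{(n)}$ and $f$ is central in $F_3^{(n)}$, so $[f,x_i]=0$. For $i\ge 4$, let $\varphi$ be the endomorphism of $F^{(n)}$ with $\varphi(x_3)=x_i$ and $\varphi(x_j)=x_j$ for $j\ne 3$; as $f$ involves only $x_1,x_2$ we get $\varphi(f)=f$, hence $[f,x_i]=[\varphi(f),\varphi(x_3)]=\varphi([f,x_3])=0$. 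Together with the trivial inclusion this yields $Z(F_2^{(n)})=F_2^{(n)}\cap Z(F^{(n)})$.

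The main obstacle is the step $Z(F_2^{(n)})\subseteq Z(F_3^{(n)})$, together with the reason one has to route the argument through $F_3^{(n)}$ at all: the naive statement $T^{(n-1)}(F^{(n)})\subseteq Z(F^{(n)})$ is \emph{false} in the free algebra of countable rank --- by the Remark after Theorem \ref{th:prod}, already $[x,y][z,t]\notin T^{(3)}$, so $T^{(2)}(F^{(3)})\not\subseteq Z(F^{(3)})$. What makes the proof work is that the central elements we must control come from $F_2^{(n)}$ and hence involve only two variables, so adjoining a single extra generator keeps us inside a $3$-generated subalgebra where Theorem \ref{th:prod} applies; the further passage to all of $F^{(n)}$ is then the routine endomorphism argument above, and it cannot be replaced by iterating $F_3^{(n)}\subseteq F_4^{(n)}\subseteq\cdots$, since Theorem \ref{th:prod} already fails for $4$ generators.
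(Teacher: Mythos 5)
Your proposal is correct and takes essentially the same route as the paper: reduce $Z(F_2^{(n)})$ via Theorems \ref{th:z1}--\ref{th:z2} to $T^{(n-1)}$ (plus $Z_q$ in positive characteristic), apply Theorem \ref{th:prod} and its corollary inside the $3$-generated algebra $F_3^{(n)}$, and pass to all of $F^{(n)}$ by the freeness/substitution argument. You merely make explicit the treatment of the $Z_q$ part and the final extension step, which the paper's proof leaves implicit.
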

\begin{proof}
 Пусть $f( x,y)\in Z(F_2^{(n)})$, тогда в силу доказанных теорем можно считать, что $f\in T^{(n-1)}(F_2^{(n)})$ . Применяя теорему \ref{th:prod}, имеем $[f,z]\in [T^{(n-1)}( F_3),F_3]\subseteq T^{(n)}( F_3)=0$, т.е. $f\in F_2^{(n)}\cap Z( F^{(n)})$. Следовательно, $Z(F_2^{(n)})\subseteq F_2^{(n)}\cap Z( F^{(n)})$. Обратное включение тривиально.
\end{proof}

\begin{remark}
  Для алгебры $F_3^{(3)}$ утверждение следствия неверно, поскольку $[x,y]z\in Z(F_3^{(3)})$ в силу теоремы \ref{th:prod}, но $[x,y]z\notin Z({F^{(3)}})$.
\end{remark}

\section{$\rm T$-пространства, связанные с алгеброй $F_2^{(3)}$}
\subsection{$\rm T$-пространства в алгебре многочленов $K[x_1,x_2]$.}

Пусть $U$~--- $\rm T$-пространство в алгебре многочленов $K[x_1,x_2]$ (свободная ассоциативная и коммутативная алгебра ранга 2). Элемент $u\in U$ является линейной комбинацией нормированных одночленов, т.е.
$$u=\sum_{(i_1,i_2)}\alpha_{(i_1,i_2)}x_1^{i_1}x_2^{i_2}.$$
Пусть $\alpha_{(i_1,i_2)}\neq 0$. Поскольку каждое $\rm T$-пространство, в частности, $U$ замкнуто относительно взятия однородных компонент, то $x_1^{i_1}x_2^{{i_2}}\in U$. Подставляя вместо одной из переменных единицу, получаем $x_1^{i_1}\in U,x_2^{{i_2}}\in U$. Допустим, что $i_1={p^s}m$, где $(p,m)=1$. Подставляя вместо $x_1$ сумму $x_1+1$ и вычисляя компоненту степени $p^s$ по переменной $x_1$, получаем
$\binom{p^sm}{p^s}x_1^{p^s}\in U$,
откуда в силу леммы \ref{l:3.1} верно $x_1^{p^s}\in U$. Считая, что $s$~--- наименьшее положительное число,
для которого $x_1^{p^s}\in U$, получаем $U=K[x_1^{p^s},x_2^{p^s}]$.

Таким образом, \textit{всякое собственное $\rm T$-пространство алгебры многочленов $K[x_1,x_2]$ порождается (как $\rm T$-пространство) элементом $x_1^{p^s}$, значит, совпадает с алгеброй $K[x_1^{p^s},x_2^{p^s}]$}.

Заметим, что указанный факт справедлив для алгебры многочленов от любого числа переменных (см. [3]).

\subsection{Предварительные леммы.}

Всюду далее, если не оговорено противное, $p=\text {char}(K)>0$. Всякий однородный многочлен из коммутаторного идеала алгебры $F_2^{(3)}$ пропорционален многочлену вида $f(q_1,q_2)=[x,y]{x^{q_1-1}}{y^{q_2-1}}$.
В [4] было доказано, что $\rm T$-пространство, порожденное многочленами
$f({p^s},{p^s}),\ s\in \mathbb{N}$,
бесконечно базируемо (т.е. не является конечно базируемым)
в алгебре $F_2^{(3)}$ (не содержащей единицу).

Многочлен $f(q_1,q_2)$ назовем \textit{особым}, если $q_1,q_2\in \{ {p^s}\mid s\ge 1\}$. Заметим, что можно считать $q_1 \leq q_2$, поскольку $f(q_1,q_2)$ и $f(q_2,q_1)$ очевидно $\rm T$-эквивалентны. В следующем пункте мы докажем, что всякое собственное $\rm T$-пространство алгебры $F_2^{(3)}$, содержащееся в коммутаторном идеале $T^{(2)}$, порождается особыми многочленами и коммутатором $[x,y]$.

Для этого нам потребуются две леммы, относящиеся к алгебре $F_2^{(3)}$.
\begin{lemma}\label{l:tc1}
а) Если $(q_1,p)=1$, то $f(q_1,q_2)$ является $\rm T$-следствием коммутатора $[x,y]$. В частности, неособый элемент является $\rm T$-следствием коммутатора.

  б) Если $(t_1,p)=1$, то $f(p^{s_1}t_1,q_2)$ является $\rm T$-следствием $f(p^{s_1},q_2)$.
\end{lemma}

\begin{proof}
Поскольку коммутатор $[x,y]$ централен в алгебре $F_2^{(3)}$, то применяя внутреннее дифференцирование $D_y$, получаем $$[x^{q_1},y]=q_1[x,y]{x^{q_1-1}},$$
откуда
$$q_1f(q_1,q_2)=q_1[x,y]x^{q_1-1}y^{q_2-1}=[x^q_1,y]{y^{q_2-1}}=[x^q_1y^{q_2-1},y].$$
Значит, если $(q_1,p)=1$, то $f(q_1,q_2)$ является $\rm T$-следствием коммутатора $[x,y]$, т.е. элемент $f(q_1,q_2)$ содержится в $\rm T$-пространстве, порожденном коммутатором $[x,y]$.

Аналогично, подставляя вместо элемента $x$ элемент $x^{t_1}$, получаем
\begin{gather*}
  f(p^{s_1},q_2)|_{x=x^{t_1}}=[x^{t_1},y]( x^{t_1})^{p^{s_1}-1}y^{q_2-1}=\\
  t_1[x,y]x^{t_1-1}(x^{t_1})^{p^{s_1-1}}y^{q_2-1}=t_1[x,y]x^{t_1-1}x^{p^{s_1}t_1-t_1}y^{q_2-1}=\\
  t_1[x,y]x^{p^{s_1}t_1-1}y^{q_2-1}=t_1f(p^{s_1}t_1,q_2).
\end{gather*}
Если $(t_1,p)=1$, то $f(p^{s_1}t_1,q_2)$ является $\rm T$-следствием элемента $f(p^{s_1},q_2)$.
\end{proof}

\begin{remark}
  Коммутатор можно получить как смешанные частные производные подходящих порядков из любого многочлен $f(q_1,q_2)$. Значит, из леммы \ref{l:tc1} следует, что $\rm T$-эквивалентны любой неособый многочлен вида $f(q_1,q_2)$ и коммутатор $[x,y]$, следовательно, $\rm T$-эквивалентны все неособые многочлены вида $f(q_1,q_2)$.
\end{remark}

\begin{lemma}\label{l:tccom}
Особый элемент не является $\rm T$-следствием коммутатора.
\end{lemma}

\begin{proof}
Пусть числа $q_1,q_2$ кратны $p$. Допустим, от противного, что особый элемент $f(q_1,q_2)$ является $\rm T$-следствием коммутатора. Тогда имеем представление
\begin{equation}\label{e6}
f(q_1,q_2)=\sum_i\lambda_i[x^{a_i}y^{b_i},x^{c_i}y^{d_i}]
\end{equation}
  для подходящих натуральных чисел $a_i,b_i,c_i,d_i$ и скаляров $\lambda_i\in K$.
 Поскольку коммутатор по каждой переменной является дифференцированием и элемент $[x,y]$ централен в алгебре $F_2^{(3)}$, то справедливо равенство
 \begin{equation}\label{e7}
[x^ay^b,x^cy^d]=\begin{vmatrix}
   a & b  \\
   c & d  \\
\end{vmatrix}[x,y]x^{a+c-1}y^{b+d-1}.
\end{equation}

  Теперь на основании равенств (\ref{e6}) и (\ref{e7}) можем записать
  \[f(q_1,q_2)=\sum_i\lambda_i\begin{vmatrix}
   a_i & b_i  \\
   c_i & d_i  \\
\end{vmatrix}[x,y]x^{a_i+c_i-1}y^{b_i+d_i-1}.\]
Сравнивая степени по переменным $x$ и $y$, получаем соотношения
$$a_i+c_i\equiv b_i+d_i\equiv0\pmod p.$$
Тогда
\[\begin{vmatrix}
   a_i & b_i  \\
   c_i & d_i  \\
\end{vmatrix}= \begin{vmatrix}
   a_i+c_i & b_i+d_i  \\
   c_i & d_i  \\
\end{vmatrix}\equiv0\pmod p .\]
Тем самым, получаем противоречие: $f(q_1,q_2)=0$.
\end{proof}

\begin{remark}
 Из леммы \ref{l:tccom} следует, что особый многочлен $f(q_1,q_2)$ не является $\rm T$-следствием никакого неособого многочлена вида $f(q_1,q_2)$.
\end{remark}

\subsection{$\rm T$-пространства, содержащиеся в коммутанте.}

\begin{theorem} \label{th:5}
Всякое ненулевое $\rm T$-пространство $U$ алгебры $F_2^{(3)}$, содержащееся в коммутаторном идеале $T^{(2)}$, порождается элементом $[x,y]$ и особыми многочленами из $U$.
\end{theorem}

\begin{proof}
Пусть $A(\Phi)=F_2^{(3)}(\Phi)$ и $A_{\omega}(\Phi)=F_{\omega}^{(3)}(\Phi)$~--- $\Phi$-свободные унитальные алгебры с тождеством $[x,y,z]=0$ над множествами
$$\{x,y\} \text { и } X=\{x,y,x_1,y_1,\dots,x_k,y_k,\dots\}$$
свободных порождающих соответственно. Поясним сначала, как получить
все $\rm T$-следствия однородного многочлена $f(x,y)$ в алгебре $A(\Phi)$. Каждое следствие многочлена $f(x,y)$ получается подстановкой в $f(x,y)$ вместо переменных $x,y$ некоторых значений
 $$x=\sum_{i=1}^k \alpha_iu_i,\quad y=\sum_{j=1}^l \beta_jv_j,$$
где $ \alpha_i,\beta_j\in \Phi$, $u_i, v_j$~--- нормированные одночлены алгебры $A$.
Указанную подстановку можно получить в виде композиции двух последовательных подстановок (гомоморфизмов соответствующих алгебр):
\begin{gather}
  \label{eq:1}
  x = x_1+\dots+x_k,\qquad y = y_1+\dots+y_l,\\
  \label{eq:2}
  x_i= \alpha_iu_i,\qquad y_j = \beta_jv_j.
\end{gather}

Заметим, что $f(\sum _{i=1}^k x_i,\sum _{j=1}^l y_j)$ является суммой однородных компонент
$$f^\sigma (x_1,\dots,x_k;y_1,\dots,y_l),$$
каждая из которых определяется набором $\sigma =(n_1,\dots,{n_k},{m_1},\dots,{m_l})$ натуральных чисел таких, что
 $$\deg_{x_i}f^{\sigma}={n_i}\quad (i=1,\dots,k),$$
 $$\deg_{y_j}f^{\sigma}=m_j\ \quad (j=1,\dots,l).$$
 Отметим также, что при этом должны быть выполнены равенства
  \[\sum_{i=1}^kn_i = n = \deg_x(f),\qquad\sum_{j=1}^l{{m_j}} = m = \deg_y(f).\]
  Проводя теперь вторую подстановку (\ref{eq:2}), в силу однородности многочленов $f^\sigma$ имеем
  \begin{gather*}
    f^\sigma (\alpha_1u_1,\dots,\alpha_ku_k;\beta_1v_1,\dots ,\beta_lv_l) = \\
    \prod (\alpha_i^{n_i})\prod (\beta_j^{m_j})f^\sigma (u_1,\dots,u_k;v_1,\dots,v_l).
  \end{gather*}
Итак, для того, чтобы описать все $\rm T$-следствия многочлена $f(x,y)$ в алгебре $A(\Phi)$ достаточно рассмотреть все его линеаризации $f^\sigma (x_1,\dots,x_k;y_1,\dots,y_l)$ и вычислить их значения в точках $x_i=u_i, y_j = v_j$. Модуль над $\Phi$, порожденный указанными значениями, совпадает с $\Phi$-модулем всех $\rm T$-следствий многочлена $f(x,y)$ в алгебре $A(\Phi)$.

Приступая теперь к доказательству теоремы, заметим, что в силу лемм \ref{l:tc1} и  \ref{l:tccom}
достаточно проверить, что особый многочлен из $U$ не является $\rm T$-следствием особых многочленов из $U$, в определенном смысле <<меньших>> данного.
На множестве особых многочленов вида $f(q_1,q_2),\, q_1\leq q_2$ введем отношение порядка $\prec$, считая
  $$f(q_1,q_2) \prec f(q_1',q_2')\Leftrightarrow q_1\leq q_1',\, q_2\leq q_2'.$$
  Допустим, что особый многочлен $f(r_1,r_2),\, r_1\leq r_2$ является  $\rm T$-следствием особого многочлена $f(q_1,q_2),\, q_1\leq q_2$.
 Значит, $f(r_1,r_2)$ получается из $f(q_1,q_2)$ в результате выполнения подстановок (\ref{eq:1}) и (\ref{eq:2}), некоторые из одночленов, входящих в равенства (\ref{eq:2}), могут быть равны $1$. Предположим, что мы берем однородную компоненту степени $(m_1,\dots,m_l)$ по переменным $y_1,\dots, y_l$. Если какая-то из этих переменных принимает значение $1$, то в качестве следствия $f(q_1,q_2)$ при вычислении частной производной некоторого порядка может возникнуть согласно теореме \ref{th:prod} особый многочлен, меньший данного в смысле отношения $\prec$. Значит, без ограничения общности, можно считать, что значениями переменных являются одночлены $x^a y^b \ne 1$.

  Тогда обозначая через $f(q_1,q_2)^{\sigma}$ однородную компоненту многочлена, полученного из $f(q_1,q_2)$ при этой замене, имеем
   $$\deg_{x_i}f(q_1,q_2)^{\sigma}={n_i},\quad \deg_{y_j}f(q_1,q_2)^{\sigma}=m_j\ (j=1,\dots,l).$$
  Кроме того, выполнены равенства
  \[\sum_{i=1}^kn_i=q_1,\qquad \sum_{j=1}^l{{m_j}}=q_2.\]
Положим $A=A(\mathbb{Z})$ и $A_{\omega}=A_{\omega}(\mathbb{Z})$.
Кроме того, обозначим через $\eta$ гомоморфизм колец $A_{\omega} \to A$,
определяемый заменами (\ref{eq:1}) и (\ref{eq:2}), где $u_i=x^{a_i}y^{b_i}$, $v_j=x^{c_j}y^{d_j}$. Тогда в кольце $A$ выполнено равенство:
 $$\eta (f(q_1,q_2)^{\sigma}) = L f(r_1,r_2),\: \text {где } \, L \in \mathbb{Z}.$$

 В кольце $A$ в силу (\ref{eq:2}) справедливы равенства:
  \[[u_i,v_j]=[x^{a_i}y^{b_i},x^{c_j}y^{d_j}]=\begin{vmatrix}
   a_i & b_i  \\
   c_j & d_j  \\
  \end{vmatrix}[x,y]x^{a_i+c_j-1}y^{b_i+d_j-1}.\]

  Опуская в записи множитель $x^{q_1-1}$, получим сумму элементов, возникающих из $[x,y]y^{q_2-1}$ подстановкой вместо $y$ элементов $y_1,\dots,y_l$ с указанными кратностями $(m_1,\dots,m_l)$.
Вычислим коэффициент $\nu_j$, с которым входит в сумму элемент
  $$[x,y_j]y_1^{m_1}\dots y_j^{m_j-1}\dots y_l^{m_l}.$$
Хорошо известно, что число возможных перестановок
$a_1 a_2 \ldots a_n$
из $n$ элементов, в которых содержится $n_1$ элементов первого рода,
$n_2$ элементов второго рода и т.д. $n_k$ элементов $k$-го рода, т.е. $n=\sum _{i=1}^k n_i$,
равно полиномиальному коэффициенту
$$\frac{n!}{n_1!\dots n_k!}.$$
Поэтому коэффициент $\nu_j$ равен
$$ \nu_j = \frac{(q_2-1)!}{(m_1)!\dots (m_j-1)! \dots (m_l)!}= \frac{m_j}{m_1 \dots m_l}N_2,$$
где $N_2=\frac{(q_2-1)!}{(m_1-1)!\dots(m_l-1)!}$.

  Пусть на первом месте в коммутатор поставлен элемент $x^ay^b$. Тогда коэффициент при базисном элементе $[x,y]x^N y^M$, который входит в сумму
  $$\sum_{j=1}^l{{\nu_j}[ {x^a}{y^b},{y_j}]y_1^{{m_1}}\dots y_j^{{m_j}-1}\dots y_l^{{m_l}}},$$
  равен
  \[\sum_{j=1}^l{ \begin{vmatrix}
   a & b  \\
   c_j & d_j  \\
  \end{vmatrix}m_j}\frac{N_2}{m_1\dots m_l}=\sum_{j=1}^l{ \begin{vmatrix}
   a & b  \\
   m_jc_j & m_jd_j  \\
  \end{vmatrix}\frac{N_2}{m_1\dots m_l}}= \begin{vmatrix}
   a & b  \\
   c & d  \\
  \end{vmatrix}\frac{N_2}{m_1\dots m_l},\]
  где $(c,d)=\sum_{j=1}^lm_j(c_j,d_j)$. Рассуждая аналогично, т.е. выполняя необходимую линеаризацию по $x$, и проводя указанную подстановку вместо переменных $x_1,\dots,{x_k}$ элементов $u_1,\dots,{u_k}$, получим
  \[L = \begin{vmatrix}
   a & b  \\
   c & d  \\
  \end{vmatrix}\frac{N_1}{n_1\dots n_k}\frac{N_2}{m_1\dots m_l},\]
  где $N_1=\frac{(q_1-1)!}{(n_1-1)!\dots({n_k}-1)!}$, $( a,b)=\sum_{i=1}^k{{n_i}({a_i},{b_i})}$. Следовательно,
  \[L = \begin{vmatrix}
   a+c & b+d  \\
   c & d  \\
  \end{vmatrix}\frac{N_1}{n_1\dots{n_k}}\frac{{N_2}}{m_1\dots m_l}= \begin{vmatrix}
   r_1 & r_2  \\
   c & d  \\
  \end{vmatrix}\frac{N_1}{n_1 \dots n_k}\frac{N_2}{m_1\dots m_l}=\]
  \[ \begin{vmatrix}
   1 & r_2r_1^{-1}  \\
   c & d  \\
  \end{vmatrix}\frac{r_1N_1}{n_1\dots n_k}\frac{N_2}{m_1\dots m_l}=\sum_{j=1}^l{ \begin{vmatrix}
   1 & r_2r_1^{-1}  \\
   m_jc_j & m_jd_j  \\
  \end{vmatrix}}\frac{r_1N_1}{n_1\dots n_k}\frac{N_2}{m_1\dots m_l}=\]
  \[\left(\sum_{j=1}^l{m_je_j}\right)\frac{r_1N_1}{n_1\dots n_k}\frac{N_2}{m_1\dots m_l}.\]
  Докажем, что ${L \in p\mathbb{Z}}$.
 Пусть $r_1={p^{{s_1}}}>q_1={p^{t_1}}$; тогда
$$\frac{r_1N_1}{n_1\dots n_k}={p^{s_1-t_1}}\frac{q_1!}{n_1!\dots {n_k}!}\in p\mathbb{Z},$$
$$\frac {m_jN_2}{m_1\dots m_l} = \frac{(q_2-1)!}{{m_1}!\dots(m_j-1)!\dots {m_l}!}\in \mathbb{Z}.$$
Отсюда немедленно получается требуемое $L \in p\mathbb{Z}$.

\end{proof}

\begin{corollary}
Коммутаторный идеал и центр алгебры $F_2^{(3)}$ являются бесконечно порожденными $\rm T$-пространствами.
\end{corollary}

Доказательство. Утверждение для коммутаторного идеала немедленно вытекает из теоремы \ref{th:5}.
  Центр $Z(F_2^{(3)})$ порождается как $\rm T$-пространство элементом $x^p$ и коммутантом ${T^{(2)}}(F_2^{(3)})$. Поскольку
  $$\{x^p\}^T = K[x^p, y^p],\qquad K[x^p, y^p]\cap {T^{(2)}}(F_2^{(3)})=0,$$
  то утверждение верно и для центра.

\begin{remark}
  В алгебре $F_2^{(3)}$ в случае $\mathrm {char}(K)=0$ коммутаторный идеал является единственным собственным ее  $\rm T$-подпространством.
\end{remark}

\section{Т-пространства в алгебре $F_2^{(n)}$}

\subsection{Лемма о дифференцированиях.}

\begin{lemma}\label{l:der}
 Пусть $\varphi(x_1,x_2,\dots,x_n)$~--- полилинейный собственный многочлен степени $n$ , $I_{n+1}$~--- идеал алгебры $F$, порожденный собственными многочленами степени $\ge n+1,n \in \mathbb{N}$. Тогда отображение $a\mapsto \varphi(a,x_2,\dots,x_n)$ является дифференцированием по переменной $x_1$ по модулю идеала ${I_{n +1}}$.
\end{lemma}
\begin{proof}
 Легко понять, что утверждение достаточно доказать для коммутатора $c=[x_1,\dots,x_{n+1}]$.
  Проведем индукцию по числу $n$. Если $n=1$, то утверждение тривиально, поскольку $D_b \colon a\mapsto [a,b]$ является дифференцированием.
  Пусть для коммутатора $c=[x_1,\dots,x_n]$ утверждение верно; тогда для $c(a)=c(a,x_2,\dots,x_n)$ можно записать
  $$\overline {c} = c(ab) -a\cdot c(b) - c(a)\cdot b\in I_n,\quad a ,b\in F,$$
  т.е. $\overline{c}$ является линейной комбинацией правильных элементов вида \eqref{e1}, в запись которых входят коммутаторы и не более одного раза встречаются элементы $a$ и $b$, которые считаются порождающими. Значит,
 $$\overline{c}=f\cdot a+g\cdot b+h,$$
 где $f,g,h$~--- собственные многочлены степени $\ge n$.
  Полагая $a=u\in[ F,F]$, получаем по модулю $I_{n+1}$:
$$c(ub)\equiv u\cdot c(b)+c(u)\cdot b,\qquad c(bu)\equiv b\cdot c(u)+c( b)\cdot u,$$
  значит,
  $$c(ub)\equiv c(u)\cdot b,\qquad c(bu)\equiv b\cdot c(u).$$
  Итак,
$$c(ub)\equiv c(u)\cdot b,\qquad c( bu)\equiv b\cdot c(u).$$
  Применяя указанные сравнения, получаем
$$c([xy,z])=c( x[y,z])+c([x,z]y)\equiv x\cdot c([y,z])+c([x,z])\cdot y,$$
но это и означает, что коммутатор степени $n+1$ обладает нужным свойством.
\end{proof}

\begin{remark}
  Идеал ${I_{n+1}}$ является $\rm T$-идеалом.
\end{remark}

\subsection{$\rm T$-нормальный ряд в алгебре $F_2^{(n)}$.}

В этом пункте обсуждается вопрос о существовании бесконечно базируемых $\rm T$-пространств, содержащихся в этажах $T^{(n)}/{T^{(n+1)}}$. Известно [4], что этаж $T^{(2)}/{T^{(3)}}$ содержит бесконечно базируемые $\rm T$-пространства, а этаж $T^{(3)}/{T^{(4)}}$ не содержит бесконечно базируемых $\rm T$-пространств при некоторых ограничениях на характеристику (см. [11]).

\begin{definition*}
Пусть $A$~--- относительно свободная алгебра и
  $U$~--- ее $\rm T$-пространство.
  Последовательность $\rm T$-пространств $0 = U_0\subset U_1\subset \dots\subset U_k\subset U_{k+1}=U$ назовем \textit{$\rm  T$-композиционным рядом пространства} $U$, если $U_{i+1}/U_i$ не содержат собственных $\rm T$-пространств.
\end{definition*}

\begin{definition*}
  $\rm T$-композиционный ряд $\rm T$-пространства $U$ в алгебре $A$ назовем \textit{$\rm T$-нормальным}, если все его члены являются идеалами в $A$.
\end{definition*}

 \begin{theorem} \label{th:norm }
   $\rm T$-идеал $T^{(3)}$ алгебры $F_2^{(n)}\ (n\ge 4)$ над полем $K$ характеристики $p\geq n$ обладает $\rm T$-нормальным рядом. В частности, всякое $\rm T$-пространство алгебры $F_2^{(n)}$, содержащееся в $T^{(3)}$, обладает конечной системой порождающих.
 \end{theorem}

\begin{proof}
 Пусть $A=F_2^{(n)}$. Упорядочим собственные многочлены $u_1,u_2,\dots$ вида \eqref{e1} алгебры $A$, содержащиеся в $T^{(n-1)}$, считая, что $u_i < u_j$, если $\deg u_i > \deg u_j$. В частности, наименьшим является элемент $u_1={{[xy]}^{n-2}}$ наибольшей степени. Заметим, что степень любого из многочленов $u_1,u_2,\dots$ по любой из переменных $x$ или $y$ не выше, чем $n-1$. Из конечной последовательности $u_1,u_2,\dots$ удалим те члены, которые являются $\rm T$-следствиями предыдущих. Все оставшиеся члены, за исключением, быть может, двух $u=[xy\dots y]$ и $v=[yx\dots x]$, имеют по каждой переменной степень $\ge 2$. Заменим $u$ на линеаризацию $u\Delta _y^{1}(x)$, которая имеет степень 2 по переменной $x$. Заметим, что ввиду ограничений на характеристику элементы $u=[xy\dots y]$ и $u\Delta _y^{1}(x)$ разумеется $\rm T$-эквивалентны. Тем самым, доказано, что все элементы $u_1,u_2,\dots$ по каждой из переменных $x,y$ имеют степени из интервала $[2,n-1]$.

  Пусть $T_k$~--- идеал алгебры $A$, порожденный элементами $u_1,\dots,u_k$. В силу замечания, сделанного после леммы \ref{l:der}, идеал $T_k$ является $\rm T$-идеалом. Пусть $u = u_{k+1}$. Поскольку $u\in Z(F_2^{(n)}/T_k)$ в силу леммы \ref{l:der} и порядка, введенного на множестве $\{u_1,u_2,\dots\}$, то имеем
$$T_{k+1} =AuA + T_k = uA + T_k = \sum_{(l,m)} \alpha_{lm}ux^ly^m + T_k.$$
   Отсюда следует, что если $U$~--- $\rm T$-пространство, содержащееся в факторе ${T_{k+1}}/T_k,$ то после применения подходящего числа частных производных получим $u\in U$.

   Линеаризуем элемент $u$ по переменной $x$ подстановкой $x = x_1 + x_2 + x_3$ и выберем однородную компоненту степени $(1,1,d-2)$, где $d=\deg_x(u)$. Вместо новых переменных подставим элементы $x_1 = x^i$, $x_2 = x^j$, $x_3 = x$ при некоторых фиксированных значениях $i$ и $j$. Следовательно, $d(d-1)ijux^{i+j-2}\in U$ в силу леммы \ref{l:der}. Тогда $iju{x^{i+j-2}}\in U$, поскольку $d < n \leq p$.
 Покажем, что для любого целого $s\ge 0$ разрешима система
  \[\{\begin{aligned}
  i+j-2=s,\, i,j\ge 1,\ i,j\notin p\mathbb{Z}
  \end{aligned}.\]
  В самом деле, если $s=pt+r$, где $r$~--- ненулевой остаток при делении $s$ на $p$, то решением указанной системы является пара $i=2,j=pt+r$. Если же $s=pt$, то решением системы является пара $i=1,j=pt+1$.

  Тогда для любого $s:u{x^s}\in U$. Рассуждая аналогично с переменной $y$, получаем, что для любых $s,t:u{x^s}{y^t}\in U$. Это доказывает, что фактор ${T_{k+1}}/T_k$ не имеет собственных $\rm T$-подпространств.

  Тем самым, построен нормальный ряд между $T^{(n-1)}$ и $T^{(n)}$. Аналогичным образом можно построить $\rm T$-нормальное уплотнение ряда $T^{(n-2)} \supset T^{(n-1)}$ и т.д.
\end{proof}

\begin{remark}
Покажем, что алгебра $A=F_2^{(n)}$ ранга 2 при $n\ge 5$ над бесконечным полем $K$ содержит бесконечно много $\rm T$-идеалов.

Во-первых, заметим, что элементы $[xy]^2,[xyxy]$ линейно независимы в $F_2^{(5)}$. Если $[xy]^2 = \gamma [xyxy]$, то $[x,y]^2\in Z({F_3^{(5)}})$, что неверно (см. [12], лемма 4). Пусть ${I_\alpha}$~--- $\rm T$-идеал в алгебре  $A=F_2^{(5)}$, порожденный многочленом
$f_\alpha = [xy]^2 + \alpha [xyxy]$, где $\alpha\in K$.
Покажем, что ${I_\alpha}={I_{\beta }}$ тогда и только тогда, когда $\alpha =\beta$. В самом деле, если ${I_\alpha}={I_{\beta }}$, то ${f_\alpha}=\lambda {f_{\beta }}$. Поскольку $[xy]^2,[xyxy]$ линейно независимы, то $\lambda =1$ и $f_\alpha = f_\beta$, т.е. $\alpha =\beta.$
\end{remark}

\textbf{Литература}

\begin{enumerate}

\item   В.Н. Латышев, О конечной порожденности $\rm T$-идеала с элементом $[x_1, x_2, x_3, x_4]$, \textit{Сиб. матем. журн.}, \textbf{6}:6 (1965), 1432--1434.

\item   И.Б. Воличенко, $\rm T$-идеал, порожденный элементом $[x_1, x_2, x_3, x_4]$, Препринт, Институт математики АН БССР. Минск. 1978.

\item   А.В. Гришин, Примеры не конечной базируемости $\rm T$-пространств и $\rm T$-идеалов в характеристике 2, \textit{Фунд. и прикл. матем.}, \textbf{5}:1 (1999), 101--118.

\item   В.В. Щиголев, Примеры бесконечно базируемых $\rm T$-пространств, \textit{Матем. Сб.}, \textbf{191}: 3 (2000), 143--160.

\item   В.В. Щиголев, Примеры бесконечно базируемых $\rm T$-идеалов, \textit{Фунд. и прикл. матем.}, \textbf{5}:1 (1999), 307--312.

\item   А.Я. Белов, О нешпехтовых многообразиях, \textit{Фунд. и прикл. матем.}, \textbf{5}:1 (1999), 47--66.

\item   А.В. Гришин, Бесконечно базируемое $\rm T$-пространство над полем характеристики 2 // Тезисы докладов международной конференции по алгебре и анализу, посвященной 100-летию со дня рождения Н.Г. Чеботарева (5-11 июня 1994 г., Казань). -- С. 29.

\item   А.В. Гришин, О строении центра относительно свободной алгебры Грассмана, \textit{Успехи мат. наук}, \textbf{65}:4 (2010), 191--192.

\item   А.В. Гришин, Л.М. Цыбуля, А.А. Шокола, О $\rm T$-пространствах и соотношениях Фробениуса в относительно свободных лиевски нильпотентных ассоциативных алгебрах, \textit{Фунд. и прикл. матем.}, \textbf{16}:3 (2010), 135--148.

\item   А.В. Гришин, О центре относительно свободной лиевски нильпотентной алгебры индекса 4, \textit{Матем. заметки}, \textbf{91}:1 (2012), 42--45.

\item   А.В. Гришин, О $\rm T$-пространствах в относительно свободной двупорожденной лиевски нильпотентной алгебре индекса 4, \textit{Фунд. и прикл. матем.}, \textbf{17}:4 (2012), 133--139.

\item   А.В. Гришин, С.В. Пчелинцев, О центрах относительно свободных ассоциативных алгебр с тождеством Ли нильпотентности, \textit{Матем. Сб.}, \textbf{206}:11 (2015), 113--130.

\item   А.В. Гришин, С.В. Пчелинцев, Собственные центральные и ядерные многочлены относительно свободных ассоциативных алгебр с тождеством Ли нильпотентности степени 5 и 6, \textit{Матем. сб}, \textbf{207}:12 (2016), 54--72.

\item  А.Р. Кемер, Конечная базируемость тождеств ассоциативных алгебр, \textit{Алгебра и логика}, \textbf{26}:5 (1987), 597--641.

\item   А.Р. Кемер, Тождества конечно порожденных алгебр над бесконечным полем, \textit{Изв. АН СССР, Сер. Матем.}, \textbf{54}:4 (1990), 726--753.

\item   Н. Джекобсон, \textit{Алгебры Ли}, М:, Мир, 1964.

\end{enumerate}

Пчелинцев Сергей Валентинович

Финансовый университет при Правительстве РФ, Москва,

Институт математики им. С.Л. Соболева, Сибирское отделение РАН, Новосибирск,

e-mail: pchelinzev@mail.ru

\end{document}